\newtheorem{theorem}{Theorem}[section]
\newtheorem{lemma}[theorem]{Lemma}
\newtheorem{MT}{Main Theorem}
\newtheorem{CMT}{Corollary to Main Theorem}
\newtheorem{proof}{\textmd{\textit{Proof.}}}
\newcommand{\qedd}{\hfill \Box}
\newcommand{\wt}{\widetilde}
\def\diam{\mathop{\mathrm{diam}}\nolimits}
\title{Maximal diameter sphere theorem for manifolds with nonconstant radial curvature
\footnote{Mathematics Subject Classification (2010)\,: 53C22.}
\footnote{Keywords: maximal diameter sphere theorem, 2-sphere of revolution, ellipsoid, Toponogov comparison theorem}.}
\author{Nathaphon BOONNAM}
\date{}
\begin{document}

\maketitle

\begin{abstract}
We generalize the maximal diameter sphere theorem due to Toponogov by means of the radial curvature. As a corollary to our main theorem, we prove that for a complete connected Riemannian $n$-manifold $M$ having  radial sectional curvature at  a point   bounded from below by the radial curvature function of an ellipsoid of prolate type,  the diameter of $M$ does not exceed the diameter of the ellipsoid, and  if the diameter of $M$ equals that of the ellipsoid, then $M$ is isometric to the $n$-dimensional ellipsoid of revolution.
\end{abstract}

\section{Introduction}
The maximal diameter sphere theorem due to Toponogov says that for a complete connected Riemannian manifold $M$ whose sectional curvature is bounded from below by a positive constant $H$, 
the diameter of $M$ does not exceed $\pi/\sqrt{H}$ and if the diameter of $M$ equals $\pi/\sqrt{H}$, then $M$ is isometric to the sphere with radius $\sqrt{H}$. This theorem was generalized by Cheng \cite{Ch} for a complete connected Riemannian manifold whose Ricci curvature is bounded from below by a positive constant $H$.

A natural extension of the maximal diameter sphere theorem by the radial curvature would be:

For a complete connected Riemannian manifold $M$ whose radial sectional curvature at a point $p$ is not less than a positive constant $H$,
\begin{description}
	\item[\textnormal{(A)}] is the diameter of $M$ at most $\pi/\sqrt{H}$?
	\item[\textnormal{(B)}] Furthermore, if the diameter of $M$ equals $\pi/\sqrt{H}$, is $M$ isometric to the sphere with the radius $\sqrt{H}$? 
\end{description}

Notice that the problem (A) can be affirmatively solved. It is an easy consequence from Theorem \ref{th2.2} (or  the Main theorem in \cite {SST}).
Here, we define the radial plane and radial curvature from a point $p$ of a complete connected Riemannian manifold $M$. For each point $q$ distinct from the point $p$, a $2$-dimensional linear subspace $\sigma$ of $T_qM$ is called a {\it radial plane} at $q$ if there exists a unit speed minimal geodesic segment $\gamma:[0,d(p,q)]\to M$ satisfying $\sigma \ni \gamma'(d(p,q))$ and the sectional curvature $K(\sigma)$ of $\sigma$ is called a {\it radial curvature} at $p$.

The problem (B) is still open, but one can generalize the maximal diameter sphere theorem for a manifold which {\it has radial curvature at a point bounded from below by the radial curvature function  of a $2$-sphere of revolution,} which will be defined later,  if the $2$-sphere of revolution belongs to a certain class. 

For introducing this class of a $2$-sphere of revolution, we start to define a $2$-sphere of revolution.

Let $\wt M$ denote a complete Riemannian manifold homeomorphic to a $2$-sphere. 
$\wt M$ is called a {\it $2$-sphere of revolution}  if $\wt M$ admits a point $\tilde p$ such that for any two points $\tilde q_1,\tilde q_2$ on $\wt M$ with $d(\tilde p,\tilde q_1)=d(\tilde p,\tilde q_2)$, where $d(\;,\;)$ denotes the Riemannian distance function, 
there exists an isometry $f$ on $\wt M$ satisfying $f(\tilde q_1)=\tilde q_2$ and $f(\tilde p)=\tilde p$. The point $\tilde p$ is called a {\it pole} of $\wt M$. 
It is proved in \cite{ST} that $\wt M$ has another pole $\tilde q$ and the Riemannian metric  $g$ of $\wt M$ is expressed as $g=dr^2+m(r)^2d\theta^2$ on $\wt M \setminus \{\tilde p,\tilde q\}$, where $(r,\theta)$ denote geodesic polar coordinates around $\tilde p$ and
$$m(r(x)):=\sqrt{g\bigg(\bigg(\frac{\partial}{\partial \theta}\bigg)_x,\bigg(\frac{\partial}{\partial \theta}\bigg)_x\bigg)}.$$
Hence $\wt M$ has a pair of poles $\tilde p$ and $\tilde q$. In what follows, $\tilde p$ denotes a pole of $\wt M$ and we fix it. Each unit speed geodesic emanating from $\tilde p$ is called a {\it meridian}. It is observed in \cite{ST} that each meridian $\mu:[0,4a] \to \wt M$, where $a:=\frac{1}{2}d(\tilde p,\tilde q)$, passes through $\tilde q$  and is  periodic, hence, $\mu(0)=\mu(4a)=\tilde p, \mu'(0)=\mu'(4a)$.
The function $G\circ \mu : [0,2a]\to R$ is called the {\it radial curvature function} of $\wt M,$ where $G$ denotes the Gaussian curvature of $\wt M$.

A $2$-sphere of revolution $\wt M$ with a pair of poles $\tilde p$ and $\tilde q$ is called a {\it model surface} if $\wt M$ satisfies the following two properties:
\begin{description}
	\item[{\normalfont (1.1)}] $\wt M$ has a reflective symmetry with respect to the {\it equator}, $r=a=\frac{1}{2}d(\tilde p,\tilde q)$.
	\item[{\normalfont (1.2)}] The Gaussian curvature $G$ of $\wt M$ is strictly decreasing along a meridian from the point $\tilde p$ to the point on the equator.
\end{description}
A typical example of a model surface is an ellipsoid of prolate type, i.e., the surface defined by
$$\frac{x^2+y^2}{a^2}+\frac{z^2}{b^2}=1, \quad b>a>0.$$
The points $(0,0,\pm b)$ are   a pair of poles and $z=0$ is the equator.

The Gaussian curvature of  a model surface  is not always  positive everywhere.
In \cite {ST}, an interesting model surface was introduced. 
The surface generated by the $(x,z)$-plane curve $(m(t),0,z(t))$
is a model surface, where
$$m(t):=\frac{\sqrt{3}}{10}\bigg(9\sin \frac{\sqrt{3}}{9}t+7\sin \frac{\sqrt{3}}{3}t\bigg), \quad z(t):=\int^t_0 \sqrt{1-m'(t)^2}dt.$$
It is easy to see  that  the Gaussian curvature 
of the equator $r=3\sqrt{3}\pi/2$ is $-1.$

Let $M$ be a complete connected  $n$-dimensional Riemannian manifold with a base point $p$. $M$ is said to have {\it radial sectional curvature at p bounded from below by the radial curvature function $G\circ \mu$ of  a model surface} $\wt M$ if for any point $q(\ne p)$ and any radial plane  $\sigma\subset T_q M$ at $q$, the sectional curvature $K(\sigma)$  of $M$ satisfies 
$K(\sigma)\geq G\circ\mu(d(p,q))$.

For each $2$-dimensional model $\wt M$ with a Riemannian metric $dr^2+m(r)^2d\theta^2$, we define an $n$-dimensional model $\wt M^n$ homeomorphic to an $n$-sphere $S^n$ with a Riemannian metric
$$g^*=dr^2+m(r)^2d\Theta^2,$$
where $d\Theta^2$ denotes the Riemannian metric of the $(n-1)$-dimensional unit sphere $S^{n-1}(1)$.
For example, the $n$-dimensional model of the ellipsoid above  is the $n$-dimensional ellipsoid defined by
$$\sum_{i=1}^n \frac{x_i^2}{a^2}+\frac{x_{n+1}^2}{b^2}=1.$$

In this paper, we generalize the maximal diameter sphere theorem as follows:
\begin{MT}
Let $M$ be a complete connected  $n$-dimensional Riemannian manifold with a base point $p$. If $M$ has radial sectional curvature at $p$ bounded from below by the radial curvature function of a model surface  $\wt M$, then the diameter of $M$ does not exceed the diameter of $\wt M$ and furthermore  if the diameter of $M$ equals that of $\wt M,$ then $M$ is isometric to the $n$-dimensional model $\wt M^n$.
\end{MT}

As a corollary, we get an interesting result:

\begin{CMT}
For any complete connected n-dimensional Riemannian manifold $M$ having radial sectional curvature at a point $p$ bounded  from below by the radial curvature function of the ellipsoid $\wt M$ defined by
$$\frac{x^2+y^2}{a^2}+\frac{z^2}{b^2}=1, \quad b>a>0,$$
the diameter of $M$ does not exceed the diameter of  $\wt M$
and if the diameter of $M$ equals that of $\wt M$, then $M$ is isometric to the $n$-dimensional ellipsoid $\sum_{i=1}^n \frac{x_i^2}{a^2}+\frac{x_{n+1}^2}{b^2}=1$.
\end{CMT}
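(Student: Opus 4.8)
The plan is to obtain the corollary as a direct application of the Main Theorem: it suffices to check that the prolate ellipsoid $\wt M$ is a model surface in the sense of (1.1) and (1.2), and that its $n$-dimensional model $\wt M^n$ is the stated $n$-dimensional ellipsoid. Once this is done, the curvature hypothesis of the corollary is literally the hypothesis of the Main Theorem, and both the diameter inequality $\diam M \le \diam \wt M$ and the rigidity conclusion follow at once.

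First I would record that $\wt M$ is indeed a $2$-sphere of revolution. Realizing it as the surface obtained by revolving the profile curve $\phi \mapsto (a\sin\phi, 0, b\cos\phi)$, $\phi\in[0,\pi]$, about the $z$-axis, the rotations about that axis give, for any two points equidistant from $\tilde p = (0,0,b)$, an isometry carrying one to the other and fixing $\tilde p$; thus $\tilde p$ (and likewise $\tilde q = (0,0,-b)$) is a pole. The distance from $\tilde p$ is a function of $\phi$ alone, so the equator is the parallel $\phi=\pi/2$, i.e.\ $z=0$. Property (1.1) is then immediate, since the reflection $z\mapsto -z$ preserves the defining equation, fixes the equator, and interchanges the two poles, hence is an isometry exhibiting the required symmetry.

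The substantive point, and the step I expect to be the main obstacle, is verifying the monotonicity (1.2). Writing the metric as $dr^2+m(r)^2d\theta^2$ with $m(r)$ the distance to the axis and $r$ the meridian arc length, I would use $G=-m''(r)/m(r)$. A short computation, differentiating $m(r)=a\sin\phi$ through the relation $dr = \sqrt{a^2\cos^2\phi+b^2\sin^2\phi}\,d\phi$, yields the closed form
$$G = \frac{b^2}{\bigl(a^2\cos^2\phi + b^2\sin^2\phi\bigr)^2}.$$
Since this depends only on $\phi$ (equivalently on $r$), the radial curvature function is well defined, and $G>0$ everywhere. Along a meridian from the pole to the equator, $\phi$ increases from $0$ to $\pi/2$, and the bracketed denominator has derivative $2(b^2-a^2)\sin\phi\cos\phi$, which is strictly positive on $(0,\pi/2)$ precisely because $b>a$. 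Hence $G$ is strictly decreasing from $\tilde p$ to the equator, which is exactly (1.2). (Concretely $G$ drops from $b^2/a^4$ at the pole to $1/b^2$ at the equator.) It is worth stressing that the prolate hypothesis $b>a$ is what forces the monotonicity to run in the correct direction; the oblate case $a>b$ would reverse it.

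With (1.1) and (1.2) verified, $\wt M$ is a model surface, and by the construction in Section 1 its $n$-dimensional model $\wt M^n = dr^2+m(r)^2d\Theta^2$ is isometric to the $n$-dimensional ellipsoid $\sum_{i=1}^n x_i^2/a^2 + x_{n+1}^2/b^2 = 1$, since both carry the warped-product metric with the same warping function $m$. Invoking the Main Theorem with this $\wt M$ then gives $\diam M \le \diam \wt M$, with equality forcing $M$ to be isometric to $\wt M^n$, which is the assertion of the corollary.
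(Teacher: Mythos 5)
Your proposal is correct and follows essentially the same route as the paper: the corollary is an immediate application of the Main Theorem once the prolate ellipsoid is recognized as a model surface, which the paper simply asserts in the introduction ("a typical example of a model surface"). Your explicit verification of (1.1) and (1.2), including the computation $G=b^2/(a^2\cos^2\phi+b^2\sin^2\phi)^2$ and its strict monotonicity when $b>a$, is accurate and merely fills in details the paper omits.
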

We refer to [CE] for basic tools in Riemannian Geometry, and [SST] for some properties of geodesics on a surface of revolution.

The present author would like to deeply express thanks to Professor Minoru Tanaka for suggesting the Main Theorem  and giving him various comments.

\section{Preliminaries}

We need the following two theorems, which was proved by Sinclair and Tanaka \cite{ST}.

\begin{theorem}\label{th2.1}
{\normalfont (\cite{ST})}
Let $M$ be a $2$-sphere of revolution with a pair of poles $p,q$ satisfying the following two properties,
\begin{description}
	\item[\textnormal{(i)}] $M$ is symmetric with respect to the reflection fixing $r=a$, where $2a$ denotes the distance between $p$ and $q$.
	\item[\textnormal{(ii)}] The Gaussian curvature $G$ of $M$ is monotone along a meridian from the point $p$ to the point on $r=a$.
\end{description}
Then the cut locus of a point $x \in M \setminus{\{p,q\}}$ with $\theta (x)=0$ is a single point or a subarc  of the opposite half meridian $\theta=\pi$ (resp. the parallel $r=2a-r(x)$) when $G$ is decreasing (resp. increasing) along a meridian from $p$ to the point on $r=a$. Furthermore, if the cut locus of a point $x \in M \setminus{\{p,q\}}$ is a single point, then the Gaussian curvature is constant.
\end{theorem}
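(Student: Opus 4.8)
The plan is to compute $\Cut(x)$ by following each unit-speed geodesic from $x$ and tracking the competition between its first conjugate point and its first intersection with another minimizing geodesic, exploiting the Clairaut relation together with two isometries of $\wt M$ (see \cite{SST} for the geodesic machinery). I put $x=(r_0,0)$ in geodesic polar coordinates $(r,\theta)$ around $p$ and, using the equatorial reflection of (i), assume $0<r_0\le a$. Two isometries organize the picture: the reflection $R\colon(r,\theta)\mapsto(r,-\theta)$ about the meridian through $x$, which fixes $x$ and therefore forces $\Cut(x)$ to be $R$-invariant; and the fixed-point-free involution $\Phi\colon(r,\theta)\mapsto(2a-r,\theta+\pi)$, which is an isometry precisely because $m(2a-r)=m(r)$ by (i). The point $\Phi(x)=(2a-r_0,\pi)$ is the natural antipode of $x$; it lies simultaneously on the opposite half meridian $\theta=\pi$ and on the parallel $r=2a-r_0$, so it is the expected center of $\Cut(x)$ in both regimes.

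Next I parametrize the geodesics issuing from $x$ by their Clairaut constant $\nu=m(r)\sin\phi$, where $\phi$ is the angle with the meridian direction. Away from turning points one has $d\theta/dr=\pm\nu/\bigl(m(r)\sqrt{m(r)^2-\nu^2}\bigr)$, so the total angle swept between two prescribed parallels is an explicit integral in $\nu$. A geodesic with constant $\nu$ and its mirror image under $R$ (constant $-\nu$) reach $R$-symmetric points at equal distance, so by $R$-invariance of $\Cut(x)$ such a mirror pair first meets on the fixed locus $\{\theta=0\}\cup\{\theta=\pi\}$. In the decreasing-curvature case I expect this meeting to occur on $\theta=\pi$ while both geodesics are still minimizing, so it is a genuine cut point carrying two distinct minimizers; letting $\nu$ range over its admissible values sweeps these meeting points along a subarc of $\theta=\pi$ whose two endpoints are the first conjugate points of the two meridian geodesics ($\nu=0$) through the poles $p$ and $q$.

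The main obstacle, and the precise point where hypothesis (ii) is used, is to control which comes first along each geodesic, the conjugate point or the meeting with the mirror geodesic, and thereby to separate the two cases. I would encode this in the monotonicity of the half-period/angle function $\nu\mapsto\Theta(\nu)$ built from the integral above. When $G$ is strictly decreasing along a meridian from $p$ to the equator, a Jacobi-field comparison should show that, for every $\nu\ne 0$, the mirror pair meets on $\theta=\pi$ strictly before either geodesic reaches a conjugate point, so the meetings genuinely constitute $\Cut(x)$ and the strict monotonicity of $\Theta$ makes the cut locus a single unbranched subarc of $\theta=\pi$ rather than a branching tree. When $G$ is strictly increasing the comparison reverses: each geodesic now reaches a conjugate point before its mirror meeting, the extra minimizers pair up across the parallel $r=2a-r_0$ instead, and the resulting cut points fill a subarc of that parallel, whose invariance under both $R$ and $\Phi$ is consistent with the claim. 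Proving the monotonicity of $\Theta$ directly from (ii)---essentially that the sign of $\Theta'(\nu)$ is governed by the sign of $(G\circ\mu)'$---is the crux; I expect it to come from differentiating the defining integral and rewriting the integrand by means of the Jacobi equation $m''+Gm=0$, but the estimates are delicate near the turning points and near the poles, where the integrand is singular.

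Finally, the rigidity clause follows from the same function by degeneration. If $\Cut(x)$ is a single point it can only be the antipode $\Phi(x)=(2a-r_0,\pi)$, so the meridian subarc and the parallel subarc must both collapse to that point; tracing this back through the monotone function forces $\Theta'(\nu)\equiv 0$, hence $(G\circ\mu)'\equiv 0$, i.e. $G$ is constant along every meridian. As $G$ depends on $r$ alone, $G$ is then constant on all of $\wt M$, so the solution of $m''+Gm=0$ with $m(0)=0$, $m'(0)=1$ is the round profile and $\wt M$ has constant Gaussian curvature, as asserted.
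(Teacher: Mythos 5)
First, a point of orientation: the paper does not prove Theorem~\ref{th2.1} at all --- it is quoted from Sinclair--Tanaka \cite{ST} --- so there is no in-paper argument to compare yours against. Judged on its own terms, your outline does follow the general architecture of the actual proof in \cite{ST}: Clairaut constants, the reflection $R$ about the meridian through $x$, the involution supplied by the equatorial symmetry (i), and a swept-angle function $\Theta(\nu)$ whose monotonicity, extracted from the curvature monotonicity (ii), decides whether the cut locus lies on the opposite half meridian or on the antipodal parallel. As a roadmap this is sound.

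As a proof, however, it has a genuine gap, and you have located it yourself: everything hinges on (a) showing that for $\nu\neq 0$ the mirror pair $\gamma_\nu$, $\gamma_{-\nu}$ meets the fixed locus of $R$ \emph{before} either geodesic reaches its first conjugate point in the decreasing case (and the reverse ordering in the increasing case), and (b) the strict monotonicity of $\Theta$, which is what forces the cut locus to be a single unbranched subarc rather than a tree. Both are offered only as expectations (``a Jacobi-field comparison should show\dots'', ``I expect it to come from differentiating the defining integral\dots''); this is precisely the technical core of \cite{ST}, where the differentiation of the singular integral near the turning point $m(r)=\nu$ and the rewriting of the integrand via $m''+Gm=0$ take substantial work and are not routine. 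Two smaller steps also need repair. The $R$-invariance of $\Cut(x)$ does not by itself imply that a mirror pair \emph{first} meets on $\{\theta\equiv 0\pmod{\pi}\}$; a priori $\gamma_\nu$ and $\gamma_{-\nu}$ could intersect off the fixed locus at a smaller parameter, and one must rule this out through the swept-angle integral, not through symmetry of the cut locus alone. And the rigidity clause (single cut point implies $G$ constant) is asserted by ``tracing back through the monotone function'' without showing that collapse of the subarc to the single point $\Phi(x)$ forces $\Theta'\equiv 0$ on the entire admissible range of $\nu$, rather than merely pinning down the endpoint values of $\Theta$. Until (a), (b) and these two points are supplied, the proposal is a correct skeleton of the Sinclair--Tanaka argument but not a proof.
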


Here, we review the notion of a cut point and a cut locus. Let $M$ be a complete Riemannian manifold with a base point $p$. Let $\gamma:[0,a] \to M$ denote a unit speed minimal geodesic segment emanating from $p=\gamma(0)$ on $M$. If any extended geodesic segment $\bar \gamma:[0,b] \to M$ of $\gamma$, where $b>a$, is not minimizing arc joining $p$ to $\bar \gamma (b)$ anymore, then the endpoint $\gamma (a)$ of the geodesic segment is called a {\it cut point} of $p$ along $\gamma$. For each point $p$ on $M$, the {\it cut locus} $C_p$ is defined by the set of all cut points along the minimal geodesic segments emanating from $p$. It is known (for  example see \cite {SST})  that the cut locus has a local tree structure for  2-dimensional Riemannian manifolds.

\begin{theorem}\label{th2.2} 
Let $M$ be a complete connected  $n$-dimensional Riemannian manifold with a base point $p$ such that $M$ has  radial sectional curvature at $p$ bounded from below by the radial curvature function of a 2-sphere of revolution $\wt M$ with a pair of poles $\tilde p,\tilde q.$  Suppose that the cut locus of any point on $\wt M$ distinct from its two poles is a subset of the half meridian opposite to the point. Then for each geodesic triangle $\triangle (pxy)$ in $M$, there exists a geodesic triangle $\wt \triangle (pxy):=\triangle (\tilde p \tilde x \tilde y)$ in $\wt M$ 
such that
\begin{equation}\label{eq2.1}
d(p,x)=d(\tilde p, \tilde x),\quad 
d(p,y)=d(\tilde p, \tilde y),\quad 
d(x,y)=d(\tilde x, \tilde y),
\end{equation}
and such that
\begin{equation}\label{eq2.2}
\angle(pxy) \geqslant \angle(\tilde p \tilde x \tilde y),\quad 
\angle(pyx) \geqslant \angle(\tilde p \tilde y \tilde x),\quad 
\angle(xpy) \geqslant \angle(\tilde x \tilde p \tilde y).
\end{equation}
Here, $\angle (pxy)$ denotes the angle at the vertex $x$ of the geodesic triangle $\triangle (pxy)$.
\end{theorem}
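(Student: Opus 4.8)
The plan is to reduce the comparison to a one–dimensional statement about the distance function $r=d(p,\cdot)$ restricted to the side of $\triangle(pxy)$ opposite to $p$, and to feed the radial curvature hypothesis into that statement through Jacobi fields. Although $M$ is $n$–dimensional, a single geodesic triangle spans an essentially two–dimensional configuration, so the comparison object is the $2$–sphere of revolution $\wt M$ itself (not $\wt M^n$): first I would build the comparison triangle $\triangle(\tilde p\tilde x\tilde y)$ in $\wt M$ with the three prescribed side lengths (\ref{eq2.1}), and then show that each angle of the original triangle dominates the corresponding model angle. The hypothesis that the cut locus of every non--polar point of $\wt M$ lies on the opposite half meridian is used at the outset: it guarantees that the comparison triangle exists, that its side $\tilde x\tilde y$ is represented by an honest minimal geodesic, and that this side meets the cut locus of $\tilde p$ in at most one point, so that $\tilde r\circ\tilde\sigma$ is governed by the model equation on the whole interval.

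The analytic core is a Hessian comparison for $r$. For $q\ne p$ outside the cut locus $C_p$, let $\gamma$ be the minimal geodesic from $p$ to $q$; comparing a radial Jacobi field along $\gamma$ against the solution of the scalar Jacobi equation $f''+(G\circ\mu)\,f=0$ that produces the warping function $m$ of $\wt M$, the Rauch comparison theorem and the bound $K(\sigma)\ge G\circ\mu(r)$ give
\[
\mathrm{Hess}\,r \;\le\; \frac{m'(r)}{m(r)}\,\big(g-dr\otimes dr\big)
\]
as quadratic forms wherever $r$ is smooth. Now let $\sigma:[0,\ell]\to M$ be the minimal geodesic from $x$ to $y$ and set $\rho(t)=r(\sigma(t))$. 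Inserting $\sigma'$ into the Hessian bound yields the differential inequality
\[
\rho''(t)\;\le\;\frac{m'(\rho)}{m(\rho)}\big(1-\rho'^{\,2}\big),
\]
whose right–hand side is exactly the quantity for which the corresponding function $\tilde\rho(t)=\tilde r(\tilde\sigma(t))$ along the model side realizes equality. This is the only point at which the curvature assumption enters, and the nonconstancy of $G\circ\mu$ has been absorbed into the Sturm comparison between $f$ and $m$.

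It remains to pass from the differential inequality, together with the matched endpoint values $\rho(0)=d(p,x)=\tilde\rho(0)$ and $\rho(\ell)=d(p,y)=\tilde\rho(\ell)$, to the angle inequalities (\ref{eq2.2}). A comparison argument for the ODE $y''=\frac{m'(y)}{m(y)}(1-y'^{\,2})$ shows that the inequality for $\rho$, with these boundary data, forces $\rho\ge\tilde\rho$ on $[0,\ell]$, and hence that the one–sided derivatives satisfy $\rho'(0)\ge\tilde\rho'(0)$ and $\rho'(\ell)\le\tilde\rho'(\ell)$. Since $\rho'(0)=-\cos\angle(pxy)$ and $\rho'(\ell)=\cos\angle(pyx)$, with the identical relations in $\wt M$, these are precisely the first two inequalities of (\ref{eq2.2}); the remaining inequality $\angle(xpy)\ge\angle(\tilde x\tilde p\tilde y)$ at $p$ comes from the same comparison applied to the total turning of the angular coordinate $\theta$ along the development of $\sigma$ into $\wt M$.

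I expect the main obstacle to be twofold. First, the geodesic $\sigma$ may cross the cut locus $C_p$ of $M$, where $r$ is not smooth and $\rho''$ can acquire a singular part of the wrong sign; the standard remedy is Calabi's support–function (upper barrier) trick, which constructs smooth radial functions touching $r$ from above at a cut point and thereby recovers the inequality $\rho''\le\frac{m'(\rho)}{m(\rho)}(1-\rho'^{\,2})$ in the barrier sense, so that the comparison survives. Second, and more seriously, the model curvature is nonconstant and the warping term $m'/m$ changes sign across the equator $r=a$, so the ODE comparison is not a routine maximum–principle argument: establishing $\rho\ge\tilde\rho$ requires the monotonicity of $G$ along a meridian (property~(1.2) of the model) together with the cut–locus hypothesis, which keep the development of $\sigma$ inside the region of $\wt M$ where the comparison is valid. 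Controlling this interplay between the sign change of $m'/m$ and the position of the developed curve relative to the equator is the heart of the proof.
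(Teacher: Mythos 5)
First, a point of reference: the paper does not prove Theorem \ref{th2.2} at all --- it is imported verbatim from Sinclair--Tanaka \cite{ST}, where it occupies most of that paper and rests on a detailed ``model lemma'' about geodesics and the cut locus of $\wt M$. So your proposal has to be measured against that proof, and against the general radial-curvature Toponogov literature (Itokawa--Machigashira--Shiohama, \cite{SST}), whose strategy you have correctly identified: compare $\rho(t)=d(p,\sigma(t))$ along the side opposite $p$ with its model counterpart, using the Hessian comparison $\mathrm{Hess}\,r\le (m'(r)/m(r))(g-dr\otimes dr)$ and Calabi's barrier trick at the cut locus of $p$. That part of your outline is sound.

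The gap is that the two steps you defer are precisely the content of the theorem, and neither goes through as stated. (a) The existence of a comparison triangle realizing all three side lengths \eqref{eq2.1} is not an ``at the outset'' consequence of the cut-locus hypothesis; in \cite{ST} it is established jointly with the angle inequalities by a continuity/deformation argument on triangles, and it requires knowing how $d(\tilde x,\tilde y)$ varies as the vertex angle at $\tilde p$ sweeps $[0,\pi]$ --- which is exactly what the model lemma about the cut locus of non-polar points provides. (b) The passage from $\rho''\le\frac{m'(\rho)}{m(\rho)}(1-\rho'^2)$ with matched endpoints to $\rho\ge\tilde\rho$ is not a valid two-point comparison principle for this ODE: the right-hand side is not monotone in $\rho$ once $m'$ changes sign at the equator, and the substitution $h=1-\cos\rho$ that linearizes the constant-curvature case has no analogue for nonconstant $G$. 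You flag this as ``the heart of the proof,'' which is accurate, but you do not supply the argument that replaces it; in \cite{ST} the replacement is a careful analysis, via Clairaut's relation and the cut-locus structure, of where the developed curve sits relative to the equator, and it is there that hypothesis (1.2) and the cut-locus assumption actually do their work. (c) The vertex-angle inequality $\angle(xpy)\ge\angle(\tilde x\tilde p\tilde y)$ via ``total turning of $\theta$'' is only a slogan; it needs the monotonicity of $d(\tilde x,\tilde y)$ in the angle at $\tilde p$ (Alexandrov convexity in the model), which again is part of the model lemma. In short: the skeleton is the right one, but every load-bearing joint is left open.
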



\section{Proof of Main Theorem}
Let $M$ be a complete connected  $n$-dimensional  Riemannian manifold with a base point $p$ and $\wt M$  a $2$-sphere of revolution with a pair of poles $\tilde p,\tilde q$ satisfying (1.1) and (1.2) in the introduction, i.e., a model surface.

From now on, we assume that $M$ has radial sectional curvature at $p$ bounded from below by the radial curvature function of $\wt M$. By scaling the Riemannian metrics of $M$ and $\wt M$, we may assume that $2a=\pi$. 

\begin{lemma}\label{lem3.1}
The perimeter of any geodesic triangle $\wt \triangle (pxy)$ of $\wt M$ does not exceed $2\pi$, i.e.,
\begin{equation}\label{eq3.1}
d(\tilde p,\tilde x)+d(\tilde p,\tilde y)+d(\tilde x,\tilde y) \leqslant 2\pi.
\end{equation}
\end{lemma}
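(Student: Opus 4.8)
The plan is to bound the third side $d(\tilde x,\tilde y)$ of the comparison triangle by routing it through the \emph{second} pole $\tilde q$ rather than estimating it directly. Writing $r_1=d(\tilde p,\tilde x)$ and $r_2=d(\tilde p,\tilde y)$, the triangle inequality gives $d(\tilde x,\tilde y)\le d(\tilde x,\tilde q)+d(\tilde q,\tilde y)$, so it suffices to establish the two estimates $d(\tilde x,\tilde q)\le 2a-r_1$ and $d(\tilde q,\tilde y)\le 2a-r_2$. Adding these to $r_1+r_2$ makes the $r_i$ cancel and leaves the constant $4a=2\pi$.

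The key estimate comes from the meridian structure recalled in the Introduction. Since $r$ is exactly the distance from $\tilde p$ in geodesic polar coordinates, we have $d(\tilde p,\tilde x)=r(\tilde x)=r_1$, and we may choose a meridian $\mu\colon[0,4a]\to\wt M$ with $\mu(r_1)=\tilde x$. Because every meridian passes through the second pole at parameter $2a$, we have $\mu(2a)=\tilde q$, so the subarc $\mu|_{[r_1,2a]}$ is a path from $\tilde x$ to $\tilde q$ of length $2a-r_1$. Hence $d(\tilde x,\tilde q)\le 2a-r_1$, and the same argument applied to $\tilde y$ yields $d(\tilde q,\tilde y)\le 2a-r_2$.

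Combining the estimates with the normalization $2a=\pi$ gives
\[
d(\tilde p,\tilde x)+d(\tilde p,\tilde y)+d(\tilde x,\tilde y)\le r_1+r_2+(2a-r_1)+(2a-r_2)=4a=2\pi ,
\]
which is precisely \eqref{eq3.1}. The degenerate cases in which $\tilde x$ or $\tilde y$ coincides with a pole are covered automatically, since the meridian subarc then reduces to a point or to a full meridian and the inequality $d(\tilde x,\tilde q)\le 2a-r_1$ still holds.

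There is no serious obstacle here: the only facts to verify are that the radial coordinate $r$ coincides with the distance function from $\tilde p$ and that each meridian meets $\tilde q$ at parameter $2a$, both of which are part of the $2$-sphere-of-revolution structure established in \cite{ST}. It is worth noting that the argument uses only this structure and neither of the model-surface hypotheses (1.1) and (1.2); the reflective symmetry (1.1) would merely upgrade $d(\tilde x,\tilde q)\le 2a-r_1$ to an equality, but only the inequality is needed for the perimeter bound.
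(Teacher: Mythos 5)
Your proposal is correct and follows essentially the same route as the paper: both bound $d(\tilde x,\tilde y)$ by going through the second pole $\tilde q$ and using the relation between $d(\tilde x,\tilde q)$ and $2a-d(\tilde p,\tilde x)$. The only cosmetic difference is that the paper invokes the equality $d(\tilde q,\tilde x)=\pi-d(\tilde p,\tilde x)$ outright, while you derive the (sufficient) upper bound from the length of the meridian subarc.
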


\begin{proof}
Since $d(\tilde p,\tilde q)=2a=\pi$, it follows from the triangle inequality that
\begin{align*}
d(\tilde x,\tilde y)	&\leqslant	d(\tilde q,\tilde x)+d(\tilde q,\tilde y) \\
				&=		(\pi -d(\tilde p,\tilde x))+(\pi -d(\tilde p,\tilde y)) \\
				&=		2\pi -d(\tilde p,\tilde x)-d(\tilde p,\tilde y).
\end{align*}
Therefore, the inequality \eqref{eq3.1} holds.
$\qedd$
\end{proof}

\begin{lemma}\label{lem3.2}
The perimeter of a geodesic triangle $\triangle (pxy)$ of $M$ does not exceed $2\pi$.
\end{lemma}

\begin{proof}
Let $\triangle (pxy)$ be any geodesic triangle of $M$. From Theorem \ref{th2.2}, we get a geodesic triangle $\wt \triangle (pxy)$ of $\wt M$ satisfying \eqref{eq2.1}. Hence, by Lemma \ref{lem3.1}, the perimeter of $\triangle(pxy)$ does not exceed $2\pi$.
$\qedd$
\end{proof}

\begin{lemma}\label{lem3.3}
The diameter of $\wt M$ equals $\pi,$ where the diameter $\diam \wt M$  of $\wt M$ is defined by
$$\diam \wt M:=\max\{d(\tilde x,\tilde y)|\tilde x, \tilde y \in \wt M\}.$$
\end{lemma}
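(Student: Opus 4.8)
The plan is to prove the equality by bounding $\diam \wt M$ from both sides. The lower bound is immediate: after the scaling that fixes $2a=\pi$, the pair of poles satisfies $d(\tilde p,\tilde q)=2a=\pi$, so the maximum in the definition of $\diam \wt M$ is at least $\pi$. (Since $\wt M$ is compact, this maximum is attained, so $\diam \wt M$ is well defined.) Everything therefore reduces to the upper bound $\diam \wt M\leqslant\pi$.

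For the upper bound, the key ingredient is the symmetric relation already used silently in the proof of Lemma \ref{lem3.1}: for every point $\tilde x\in\wt M$ one has $d(\tilde q,\tilde x)=\pi-d(\tilde p,\tilde x)$. Granting this, I would fix arbitrary points $\tilde x,\tilde y\in\wt M$ and write down two triangle inequalities, one routed through each pole:
\begin{align*}
d(\tilde x,\tilde y) &\leqslant d(\tilde p,\tilde x)+d(\tilde p,\tilde y), \\
d(\tilde x,\tilde y) &\leqslant d(\tilde q,\tilde x)+d(\tilde q,\tilde y)=2\pi-d(\tilde p,\tilde x)-d(\tilde p,\tilde y).
\end{align*}
Adding these two inequalities and dividing by two makes the terms $d(\tilde p,\tilde x)$ and $d(\tilde p,\tilde y)$ cancel, leaving $d(\tilde x,\tilde y)\leqslant\pi$. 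As $\tilde x,\tilde y$ were arbitrary, this gives $\diam \wt M\leqslant\pi$, and combined with the lower bound yields $\diam \wt M=\pi$.

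The computation itself is trivial; the only step needing care, and the place where the structure of the model surface actually enters, is the identity $d(\tilde q,\tilde x)=\pi-d(\tilde p,\tilde x)$. I would justify it by recalling that around each pole the geodesic polar coordinate $r$ is exactly the distance to that pole, that every meridian joins $\tilde p$ to $\tilde q$, and that along such a meridian the segment from $\tilde q$ to $\tilde x$ has length $2a-r(\tilde x)$ and is minimizing (the cut point of $\tilde q$ along a meridian being the opposite pole, at distance $2a$), so that $d(\tilde q,\tilde x)=2a-r(\tilde x)=\pi-d(\tilde p,\tilde x)$. Alternatively, the reflective symmetry (1.1) about the equator $r=a$, which interchanges $\tilde p$ and $\tilde q$ and sends the parallel $r=r_0$ to the parallel $r=2a-r_0$, delivers the identity at once. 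Since this is precisely the relation invoked in Lemma \ref{lem3.1}, no genuinely new obstacle arises beyond recording this justification.
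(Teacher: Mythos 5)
Your proof is correct and essentially the same as the paper's: your second triangle inequality, routed through $\tilde q$ together with the identity $d(\tilde q,\tilde x)=\pi-d(\tilde p,\tilde x)$, is exactly the content of Lemma \ref{lem3.1}, and combining it with the triangle inequality through $\tilde p$ is precisely how the paper concludes. Your added justification of $d(\tilde q,\tilde x)=\pi-d(\tilde p,\tilde x)$ (meridians minimize from a pole up to the opposite pole) merely makes explicit a step the paper leaves implicit in Lemma \ref{lem3.1}.
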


\begin{proof}
Choose any points $\tilde x, \tilde y$ on $\wt M$. By the triangle inequality,
\begin{equation}\label{eq3.2}
d(\tilde x,\tilde y) \leqslant d(\tilde p,\tilde x)+d(\tilde p,\tilde y).
\end{equation}
Thus, by combining \eqref{eq3.1} and \eqref{eq3.2}, we obtain
$$d(\tilde x,\tilde y) \leqslant \pi=d(\tilde p,\tilde q)$$
for any $\tilde x,\tilde y$ on $\wt M$.
$\qedd$
\end{proof}

\begin{lemma}\label{lem3.4}
The diameter $\diam  M$ of $M$ does not exceed the diameter of $\wt M$.
\end{lemma}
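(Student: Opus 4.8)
The plan is to transplant the proof of Lemma~\ref{lem3.3} from the model surface $\wt M$ to $M$, with Lemma~\ref{lem3.2} playing the role that Lemma~\ref{lem3.1} played there. Since $M$ is complete, Hopf--Rinow guarantees that any two of its points are joined by a minimal geodesic, so for any three distinct points $p,x,y$ a geodesic triangle $\triangle(pxy)$ exists and Lemma~\ref{lem3.2} applies to it.

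Fix two distinct points $x,y \in M$, both different from $p$, and form the geodesic triangle $\triangle(pxy)$. The triangle inequality in $M$ gives $d(x,y) \leqslant d(p,x)+d(p,y)$. Adding $d(x,y)$ to both sides and invoking the perimeter bound of Lemma~\ref{lem3.2}, namely $d(p,x)+d(p,y)+d(x,y) \leqslant 2\pi$, I obtain $2\,d(x,y) \leqslant d(p,x)+d(p,y)+d(x,y) \leqslant 2\pi$, hence $d(x,y) \leqslant \pi$. Running the identical step with $d(p,x) \leqslant d(p,y)+d(x,y)$ in place of the first inequality yields $d(p,x) \leqslant \pi$ as well, so in fact every side of every such triangle is at most $\pi$; this also disposes of the pairs that involve the base point $p$.

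Letting $x,y$ range over all pairs of points of $M$ therefore gives $d(x,y) \leqslant \pi$ for every pair, so $\diam M \leqslant \pi$. By Lemma~\ref{lem3.3} the right-hand side equals $\diam \wt M$, which is the desired conclusion. I expect no genuine obstacle here: the argument is the verbatim analogue of Lemma~\ref{lem3.3}, and the only points requiring care are the existence of the geodesic triangles, guaranteed by completeness, and the pairs containing $p$, both of which are routine.
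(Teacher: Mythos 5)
Your proof is correct, and in the main case (both points distinct from $p$) it is essentially the paper's argument: both reduce to the comparison triangle in $\wt M$ furnished by Theorem \ref{th2.2}, you merely routing through the perimeter bound of Lemma \ref{lem3.2} and the triangle inequality ($2\,d(x,y)\leq \text{perimeter}\leq 2\pi$) where the paper uses the side equalities \eqref{eq2.1} directly to write $d(x,y)=d(\tilde x,\tilde y)\leq \diam\wt M$. The one genuine difference is the degenerate case where one of the two points is the base point $p$: the paper invokes the Rauch comparison theorem to rule out minimal geodesics from $p$ of length exceeding $\pi$, whereas you bound $d(p,x)$ by inserting an auxiliary third point $y$ and reading off $2\,d(p,x)\leq \text{perimeter}\leq 2\pi$ from the same perimeter estimate. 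Your version is more elementary in that it avoids a second appeal to Rauch, at the small cost of needing a third point distinct from $p$ and $x$ (automatic for a manifold of positive dimension) and of implicitly assuming the comparison machinery applies to the possibly degenerate triangle so formed --- a point the paper is equally casual about. Both routes are sound.
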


\begin{proof}
Choose a pair of points $x,y \in M$ satisfying $d(x,y)=\diam M$.
Suppose that $x=p$ or $y=p.$
By the Rauch comparison theorem, there does not exist a minimal geodesic segment emanating from $p$ whose length exceeds $\pi,$ since the manifold $M$
has radial curvature at p bounded from below by the radial curvature function of the model surface $\wt M.$ Thus, $\diam M=d(x,y)\leqslant\pi.$ 
Suppose that $x\ne p$ and $y\ne p.$ Then,
for the geodesic triangle $\triangle (pxy)$ in $M$, there exists a triangle geodesic $\wt \triangle (pxy)$ in $\wt M$ satisfying \eqref{eq2.1}. Hence, we obtain $\diam M = d(\tilde x,\tilde y) \leqslant \diam \wt M$.
$\qedd$
\end{proof}

\begin{lemma}\label{lem3.5}
If $\diam M = \diam \wt M$, then there exists a point $q\in M$ with $d(p,q)=\diam \wt M.$
\end{lemma}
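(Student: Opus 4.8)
The plan is to use compactness of $M$ to produce the candidate point and then to feed the diameter hypothesis back through the model surface $\wt M$. First, since $M$ is complete with $\diam M = \pi <\infty$, the Hopf--Rinow theorem makes $M$ compact, so the continuous function $z \mapsto d(p,z)$ attains its maximum $R$ at some $q$; by the Rauch argument used in Lemma \ref{lem3.4} we already know $R \le \pi$, so it suffices to prove $R=\pi$, equivalently that $\diam M=\pi$ is realized by a pair containing $p$. Choose $x,y$ with $d(x,y)=\diam M=\pi$. If $p\in\{x,y\}$ we are done, so assume $x,y\ne p$. Combining the perimeter bound (Lemma \ref{lem3.2}) with the triangle inequality gives $\pi=d(x,y)\le d(p,x)+d(p,y)\le 2\pi-d(x,y)=\pi$, hence $d(p,x)+d(p,y)=\pi$. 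Passing to the comparison triangle $\wt\triangle(pxy)$ of Theorem \ref{th2.2}, the equality $d(\tilde p,\tilde x)+d(\tilde p,\tilde y)=d(\tilde x,\tilde y)$ forces $\tilde p$ onto a minimal geodesic from $\tilde x$ to $\tilde y$, so $\angle(\tilde x\tilde p\tilde y)=\pi$ and therefore $\angle(xpy)\ge\pi$, i.e. $\angle(xpy)=\pi$.

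The key point is that the whole problem now reduces to a statement purely about $\wt M$: it is enough to show that in $\wt M$ the diameter $\pi$ is attained \emph{only} by the pair of poles $\{\tilde p,\tilde q\}$. Granting this, the comparison vertices satisfy $d(\tilde x,\tilde y)=d(x,y)=\pi$, so $\{\tilde x,\tilde y\}=\{\tilde p,\tilde q\}$; but then one of $d(p,x)=d(\tilde p,\tilde x)$, $d(p,y)=d(\tilde p,\tilde y)$ vanishes, forcing $x=p$ or $y=p$ and contradicting $x,y\ne p$. Hence the diameter of $M$ is realized by a pair containing $p$, and the second point is the desired $q$ with $d(p,q)=\pi$.

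It remains to establish the model-surface fact, and this is where I expect the real work to lie. Suppose $\tilde x,\tilde y\in\wt M$ with $d(\tilde x,\tilde y)=\pi$. Since every point lies on a meridian realizing $d(\tilde p,\cdot)+d(\cdot,\tilde q)=\pi$, adding the two triangle inequalities through $\tilde p$ and through $\tilde q$ (exactly as in the proof of Lemma \ref{lem3.1}) forces both poles onto minimal geodesics from $\tilde x$ to $\tilde y$; smoothness of those geodesics then places $\tilde x,\tilde y$ on a single meridian great circle at radii $s$ and $\pi-s$, the two meridian arcs (one through each pole) both having length $\pi$. If $0<s<\pi$, so $\tilde x$ is not a pole, then $\tilde y$ is an interior cut point of $\tilde x$, and by Theorem \ref{th2.1} — using that the Gaussian curvature of a model surface is strictly decreasing, hence nonconstant — the cut locus $C_{\tilde x}$ is a nondegenerate subarc of the opposite half-meridian whose interior points are joined to $\tilde x$ by a pair of non-meridian geodesics interchanged by the reflection across the meridian of $\tilde x$. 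The main obstacle is to show these non-meridian minimizers are strictly shorter than the meridian-over-pole arc of length $\pi$, so that $d(\tilde x,\tilde y)<\pi$ and the case $0<s<\pi$ is excluded. This strict inequality is precisely what fails on the round sphere, where every meridian through a pole minimizes all the way to the antipode, so the argument must genuinely invoke the strict monotonicity of $G$ in condition (1.2); carrying it out amounts to comparing the meridian arc with the Clairaut geodesics on $\wt M$, a computation on the model surface rather than a formal consequence of the comparison theorems, and is the step I expect to demand the most care.
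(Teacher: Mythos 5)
Your argument tracks the paper's own proof up to the decisive point: assuming $x\ne p$ and $y\ne p$, you obtain a comparison triangle with $d(\tilde p,\tilde x)+d(\tilde p,\tilde y)=d(\tilde x,\tilde y)=\pi$, and you correctly observe (by running the triangle inequality through both poles) that $\tilde x$ and $\tilde y$ must then lie on a single meridian with \emph{both} meridian subarcs joining them minimal of length $\pi$. But the step you yourself flag as ``the step I expect to demand the most care'' --- proving that the non-meridian minimizers are strictly shorter than $\pi$ by comparing with Clairaut geodesics --- is precisely the step you do not carry out, so the proof is incomplete. That computation is also not the intended route, and it is harder than what is actually needed.

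The paper closes the argument with the rigidity (``furthermore'') clause of Theorem \ref{th2.1}, used in the opposite direction from the way you use it. Since both meridian subarcs from $\tilde x$ to $\tilde y$ are minimal of length $\pi=\diam\wt M$, the cut point of $\tilde x$ along \emph{each} of the two meridian directions is exactly $\tilde y$: it cannot occur earlier because each arc minimizes up to $\tilde y$, and it cannot occur later because no point of $\wt M$ is at distance greater than $\pi$ from $\tilde x$. By the structure of the cut locus underlying Theorem \ref{th2.1} (a subarc of the opposite half meridian whose extremities are the cut points along the two meridian directions, cf.\ \cite{ST}), the subarc therefore degenerates: $C_{\tilde x}=\{\tilde y\}$. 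The ``furthermore'' clause of Theorem \ref{th2.1} then forces the Gaussian curvature of $\wt M$ to be constant, contradicting property (1.2). You invoke only the contrapositive of that clause (nonconstant curvature implies a nondegenerate cut locus arc) and then try to refute nondegeneracy by a length estimate on the non-meridian minimizers; the missing idea is that the two maximal-length meridian minimizers already pin both ends of that arc to the single point $\tilde y$, so no estimate on Clairaut geodesics is required.
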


\begin{proof}
Let $x,y \in M$ be points satisfying  $\pi=\diam M = d(x,y)$. Supposing  that $x\ne p$ and $y\ne p,$ we will get a contradiction. Then,  there exists a geodesic triangle $\triangle(pxy)$ with $d(x,y)=\pi.$ 

 It follows from  Theorem \ref{th2.2} that there exists a geodesic triangle $\wt \triangle(pxy)$ corresponding to $\triangle(pxy)$ satisfying $d(\tilde x,\tilde y)=d(x,y)=\pi$. By the triangle inequality, $d(\tilde p,\tilde x)+d(\tilde p,\tilde y) \geqslant d(\tilde x,\tilde y)=\pi$, and Lemma \ref{lem3.1}, we get 
$$d(\tilde p,\tilde x)+d(\tilde p,\tilde y)=\pi=d(\tilde x,\tilde y).$$ 
This means that the subarc $\alpha$ (passing through $\tilde p$) of the meridian joining $\tilde x$ to $\tilde y$ is minimal. 
Hence the complementary  subarc of $\alpha$ in the meridian is also a  minimal geodesic segment joining  $\tilde x$ to $\tilde y,$ since the length of each meridian is $2\pi$. Therefore,
by Theorem \ref{th2.1} $\tilde y$ is a unique cut point of $\tilde x$ and
hence,  the Gaussian curvature $G$ of $\wt M$ is constant. We get a contradiction since $G$ is strictly decreasing along a meridian from $p$ to the point on the equator. This implies the existence of the point $q.$ 
$\qedd$
\end{proof}


\begin{lemma}\label{lem3.6}
If there exists a point $q\in M$ with $d(p,q)=\diam \wt M,$ then $q$ is a unique cut point of $p,$ and
$$K(\sigma)=G\circ \mu(d(p,x))$$
holds for any point $x \in M \setminus \{p\}$ and any radial plane $\sigma$ at $x$.
\end{lemma}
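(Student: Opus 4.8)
The plan is to prove the two assertions in turn: first a rigid distance identity forced by the existence of $q$, then a pointwise curvature equality extracted from the second variation formula. For the identity, I would fix an arbitrary $x\in M\setminus\{p,q\}$ and apply Theorem \ref{th2.2} to the geodesic triangle $\triangle(pxq)$, producing a comparison triangle $\triangle(\tilde p\tilde x\tilde q)$ in $\wt M$ with $d(\tilde p,\tilde x)=d(p,x)$, $d(\tilde x,\tilde q)=d(x,q)$ and $d(\tilde p,\tilde q)=d(p,q)=\pi$. Since the distance from the pole $\tilde p$ to any point of $\wt M$ is at most $2a=\pi$, with equality only at the opposite pole, the vertex $\tilde q$ must be the pole antipodal to $\tilde p$. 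For that pole the triangle inequality $d(\tilde p,\tilde x)+d(\tilde x,\tilde q)\ge 2a$ combines with the meridian through $\tilde x$ (which joins the two poles and has length $2a$) to give $d(\tilde x,\tilde q)=2a-d(\tilde p,\tilde x)$. Transported back to $M$, this yields $d(p,x)+d(x,q)=\pi=d(p,q)$, an identity that trivially persists for $x\in\{p,q\}$ and hence holds for all $x\in M$.

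Next I would let $\gamma$ be any unit speed geodesic from $p$, with cut time $t_c$. For $0<t<t_c$ one has $d(p,\gamma(t))=t$, so the identity supplies a minimal geodesic from $\gamma(t)$ to $q$ of length $\pi-t$; concatenating it with $\gamma|_{[0,t]}$ produces a curve from $p$ to $q$ of length $\pi=d(p,q)$, which is therefore minimal and smooth. Thus this minimal geodesic is the prolongation of $\gamma$, so $\gamma|_{[0,\pi]}$ is minimal and $\gamma(\pi)=q$. Since $\diam M\le\diam\wt M=\pi$ by Lemma \ref{lem3.4}, $\gamma$ cannot minimize past $\pi$, whence $t_c=\pi$ and the cut point of $p$ along $\gamma$ is exactly $q$. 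As $\gamma$ is arbitrary, $C_p=\{q\}$, so $q$ is the unique cut point of $p$.

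Finally, for a minimal geodesic $\gamma:[0,\pi]\to M$ from $p$ to $q$ and a parallel unit normal field $E$ along it, I would test the index form on $V:=mE$, where $m$ solves $m''+(G\circ\mu)m=0$, $m(0)=0$, $m'(0)=1$, so that $m(0)=m(\pi)=0$ and $m>0$ on $(0,\pi)$. Writing $\sigma(t)$ for the radial plane spanned by $\gamma'(t)$ and $E(t)$, one computes, using $m''=-(G\circ\mu)m$ and integrating by parts (the boundary terms vanish because $m(0)=m(\pi)=0$),
$$I(V,V)=\int_0^\pi\!\big((m')^2-K(\sigma)\,m^2\big)\,dt=\int_0^\pi\!\big(G\circ\mu(t)-K(\sigma(t))\big)\,m^2\,dt\le 0,$$
the last inequality being the radial curvature bound $K(\sigma(t))\ge G\circ\mu(t)$. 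But $\gamma$ is minimal, so $I(V,V)\ge 0$; hence the integral vanishes, and since $m^2>0$ on $(0,\pi)$ while the integrand is nonpositive, I conclude $K(\sigma(t))=G\circ\mu(t)$ there. Letting $E$ and $\gamma$ range exhausts all radial planes at every $x\in M\setminus\{p,q\}$, and the case $x=q$ follows by continuity of $K$ and of $G\circ\mu$ as $t\to\pi$.

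The crux, and the step I expect to demand the most care, is the identification of $\tilde q$ with the antipodal pole in the first step: this upgrades the harmless inequality $d(p,x)+d(x,q)\ge\pi$ into the rigid equality that simultaneously pins down the cut locus and, via the index form, forces equality of curvatures. Once that rigidity is secured, the closing second variation computation is routine.
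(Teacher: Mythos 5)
Your proof is correct, and its overall skeleton matches the paper's: first the rigidity identity $d(p,x)+d(x,q)=\pi$, then the identification of $q$ as the unique cut point, then the curvature equality along minimal geodesics of length $\pi$. The differences are in the execution of the first and last steps. For the identity, the paper simply combines the triangle inequality with the perimeter bound of Lemma \ref{lem3.2} ($d(p,x)+d(x,q)\le 2\pi-d(p,q)=\pi$), whereas you re-enter the model via Theorem \ref{th2.2} and identify the comparison vertex $\tilde q$ with the antipodal pole; both rest on the same comparison theorem, and the paper's packaging is a bit shorter. You also supply a genuine argument for the ``unique cut point'' claim (every geodesic from $p$ prolongs to a minimal geodesic of length $\pi$ ending at $q$), which the paper dismisses as ``easy to see'' --- a worthwhile addition. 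The real divergence is the curvature equality: the paper argues by contradiction, assuming $K(\sigma)>G\circ\mu(d(p,x))$ at one point and invoking the (strict) Rauch comparison theorem to produce a conjugate point before parameter $\pi$, contradicting minimality; you instead test the index form on $V=mE$ with $m$ the model's warping function, so that $I(V,V)=\int_0^\pi(G\circ\mu-K(\sigma))m^2\,dt$ is squeezed between the curvature hypothesis and the positive semidefiniteness of the index form on a minimizer. Your route is more self-contained (it does not rely on the strict form of Rauch as a black box) and delivers the equality at all parameters of the geodesic in one stroke, including the limiting case $x=q$ by continuity, which the paper's proof leaves implicit; the paper's route is shorter once Rauch is granted. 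Both are sound.
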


\begin{proof}
Since the point  $q$ is a farthest point from $p,$ 
$q$ is  a cut point of $p.$ 
Choose any point $x\in M \setminus \{p,q\}.$
By the triangle inequality,
$$d(p,x)+d(x,q)\geqslant d(p,q)=\pi$$
and by Lemma \ref{lem3.2},
$$d(p,x)+d(x,q)+d(p,q)\leqslant 2\pi.$$
Hence, we get
$$d(p,x)+d(x,q)=d(p,q)=\pi$$
and it is easy to see that $q$ is  a unique cut point of $p.$

Next, we will prove that $K(\sigma)=G\circ \mu(d(p,x))$ for any $x\in M\setminus\{p,q\}$ and any radial plane $\sigma$ at $x.$
Suppose that there exist  a point  $x \in M \setminus \{p,q\}$ and a radial plane $\sigma$ at $x$ such that $K(\sigma)>G\circ \mu(d(p,x))$. 
Let $\gamma:[0,\pi] \to M$ denote the minimal geodesic segment emanating from $p$ passing through $x$. 
Choose a unit tangent vector $v \in\sigma\subset T_xM $ orthogonal to $\gamma'(d(p,x)).$
Let  $Y(t)$ denote the Jacobi field along $\gamma(t)$ satisfying $Y(0)=0$ and  $Y(d(p,x))=v,$ and hence
 $\sigma$ is spanned by $Y(d(p,x))$ and $\gamma'(d(p,x)).$
By the  Rauch comparison theorem, there exists a conjugate point $\gamma(t_1)$ of $p$ along $\gamma$ for some $t_1\in(0,\pi), $ since $K(\sigma)>G\circ\mu(d(p,x))$ and the sectional  curvature of the radial plane spanned by $Y(t)$ and $\gamma'(t)$ is not less than $G\circ\mu(t)$  for each $t\in(0,\pi).$
This contradicts the fact that the geodesic segment $\gamma$  is minimal.
$\qedd$
\end{proof}

{\it Proof of Main Theorem.} 
The first claim is clear from Lemma \ref{lem3.4}. Suppose that $\diam M=\diam \wt M.$  By Lemmas \ref{lem3.5} and \ref{lem3.6}, $K(\sigma)=G\circ \mu(d(p,x))$ for any point $x \in M \setminus \{p\}$ and any radial plane $\sigma$ at $x$.
Thus, it follows from Lemma 1 and Theorem 3 in \cite{KK}  that $M$ is isometric to the $n$-dimensional model of $\wt M.$
Incidentally, the explicit isometry $\varphi$ between $M$ and the $n$-dimensional model of $\wt M$ is given by

\[\varphi(x):=
	\begin{cases}
		\exp_{\tilde p}\circ I\circ \exp_p^{-1}(x) &\text{if}~~~x\neq q\\
		\tilde q     &\text{if}~~~x=q,
	\end{cases}
\]
where $I:T_pM \to T_{\tilde p}\wt M$ denotes a linear isometry and $q$ denotes the unique cut  point of  $p.$


\bigskip

\begin{center}
Nathaphon BOONNAM \\

\medskip
Department of Mathematics\\
Tokai University\\
Hiratsuka City, Kanagawa\\
259\,--\,1292 Japan

\medskip
{\it e--mail } :
{\tt nut4297nb@gmail.com}\\

\end{center}


\begin{thebibliography}{MMMM}

\bibitem[Ch]{Ch}
S.~Y.~Cheng,
{\it Eigenvalue comparison theorems and its geometric applications}, Math. Z. {\bf143} (1975) 289--297.

\bibitem[CE]{CE}
J.~Cheeger and D.~Ebin, 
{\it Comparison Theorems in Riemannian Geometry}, North-Holland, Amsterdam and New York, 1975.


\bibitem[KK]{KK}
Neil~N.~Katz and Kei Kondo, 
{\it Generalized space forms}, 
Trans. Amer. Math. Soc. {\bf354} (2002) 2279--2284.


\bibitem[SST]{SST}
K.~Shiohama, T.~Shioya, and M.~Tanaka, 
The Geometry of Total Curvature on Complete Open Surfaces, Cambridge tracts in mathematics \textbf{159}, Cambridge University Press, Cambridge, 2003.

\bibitem[ST]{ST}
R.~Sinclair and M.~Tanaka, 
{\it The cut locus of a two-sphere of revolution and Toponogov's comparison theorem,} Tohoku Math. J. {\bf 59} (2007) 379--399.

\end{thebibliography}
\end{document}